\newtheorem{thm}{Theorem}[section]
\newtheorem{cor}[thm]{Corollary}
\newtheorem{remark}[thm]{Remark}
\newtheorem{lemma}[thm]{Lemma}
\newtheorem{ex}[thm]{Example}
\newtheorem{defn}[thm]{Definition}
\newcommand{\bb}[1]{\mathbb{#1}}
\newcommand{\cl}[1]{\mathcal{#1}}
\numberwithin{equation}{section}
\begin{document}
\title[Bohr's Inequality]{Bohr's Inequality for Non-commutative Hardy Spaces}

\author{Sneh Lata}
\address{Department of Mathematics\\
         School of Natural Sciences\\
         Shiv Nadar University\\
         NH-91, Tehsil Dadri
         Gautam Budh Nagar 201314\\
         Uttar Pradesh, India}
\email{sneh.lata@snu.edu.in}

\author{ Dinesh Singh}
\address{SGT University\\
        Gurugram 122505\\
        Haryana, India}
\email{dineshsingh1@gmail.com}
\thanks{First author's research is supported in part by a grant from SERB(DST) under MATRICS Scheme, India.} 

\keywords{Bohr's inequality, von Neumann-Schatten class, von Neumann algebra, trace, trace class operators, non-commutative Hardy spaces.}
\subjclass[2010]{Primary 46L10, 46L51, 46L52, 47B10; Secondary 46J10, 46J15}

\begin{abstract} 
In this paper we extend the classical Bohr's inequality to the setting of the non-commutative Hardy space $H^1$ associated with a semifinite von Neumann algebra. As a consequence, we obtain Bohr's inequality for operators in the von Neumann-Schatten class $\cl C_1$ and square matrices of any finite order.
Interestingly, we establish that the optimal bound for $r$ in the above mentioned Bohr's inequality concerning von Neumann-Shcatten class is 1/3 whereas it is 1/2 in the case of $2\times 2$ matrices and reduces to $\sqrt{2}-1$ for the case of $3\times 3$ matrices. We also obtain a generalization of our above-mentioned Bohr's inequality for finite matrices where we show that the optimal bound for $r$, unlike above, remains 1/3 for every fixed order $n\times n,\ n\ge 2$.
\end{abstract}

\maketitle

\section{introduction}
Let $H^\infty(\bb D)$ denote the algebra of all bounded analytic functions on 
the open unit disk $\bb D$ in the complex plane $\bb C$. $H^\infty(\bb D)$ is a Banach 
algebra under the supremum norm $\|\cdot\|_\infty.$ The classical 
Bohr's inequality states that if $f(z)=\sum_{k=0}^\infty a_kz^k$ is 
in $H^\infty(\bb D),$ then 
\begin{equation}\label{first}
\sum_{k=0}^\infty |a_k|r^k\le \|f\|_\infty, \quad 0\le r\le \frac{1}{3}
\end{equation}
and the inequality fails when $r>\frac{1}{3}$ in the sense that there are functions in $H^\infty(\bb D)$ for which the inequality is reversed when 
$r>\frac{1}{3}.$ Henceforth, if there exists a positive real number $r_0$ such that an inequality of the form (\ref{first}) holds for every element of a class $\cl C$ for $0\le r\le r_0$ and fails when $r>r_0,$ then we shall say that $r_0$ is an optimal bound for $r$ in the inequality w.r.t. the class $\cl C.$     

This inequality was first proved by Harald Bohr while dealing with a problem 
connected with Dirichlet series and number theory. Actually, Bohr, \cite{Boh}, established this inequality for $0\le r\le \frac{1}{6}.$ The inequality attracted the attention of a veritable who's who of analysis, and they  extended the validity of the inequality for 
$1\le r\le \frac{1}{3}$ and also established the optimality of $\frac{1}{3}.$ 
The galaxy of analysts included M. Riesz \cite{Dix}, Schur \cite{Dix}, Sidon \cite{sid}, Tomic \cite{tom} and Weiner \cite{Boh}. 

Interest in this inequality was revived a few years ago when Dixon, 
\cite{Dix}, used it to settle in the negative the conjecture that a 
non-unital Banach algebra that satisfies the von Neumann 
inequality must be isometrically isomorphic to a closed subalgebra of $B(H)$ 
for some Hilbert space $H.$ In fact, the second author of this paper along 
with his collaborators used Bohr's inequality to establish something vastly more general by proving that every Banach algebra has an equivalent 
norm under which it satisfies the non-unital von Neumann inequality, \cite{PPS}. In the past few years, Bohr's inequality has become the subject of extensive investigation and the inequality has been extended in numerous directions and settings, a sample of which can be gauged from \cite{MA2,AT,BCQ}, \cite{EPR}, \cite{PPS,PS,PS2}, \cite{Pop2}.

In this paper we establish Bohr's inequality in the setting of a non-commutative Hardy space $H^1$ associated with a semifinite von Neumann algebra. As a consequence, we obtain Bohr's inequality for operators in the 
von Neumann-Schatten class $\cl C_1$ and square matrices of any finite order. In addition, we give two more results that generalize these results regarding the class $\cl C_1$ and finite matrices. The operators of $\cl C_1$ are called the trace-class operators.  
   
We must remark that many results on the classical Hardy spaces have been established for non-commutative Hardy spaces associated with finite and semi-finite von Neumann algebras. The interested readers can refer to 
\cite{BL, BL3, BL4}, \cite{Lab2}, \cite{PX}
and the references therein. However, we mention that the Bohr's inequality has not been explored yet in this non-commutative setting. 

\section{backdrop - reproducing a generalized version of Bohr's inequality} 
In \cite{PS}, the second author of this paper (with Vern Paulsen) has established a vastly general form of Bohr's inequality in the context of the Hardy spaces associated with uniform algebras. The classical inequality follows as a trivial corollary from this generalization. In this paper we establish various analogues of this generalized version.  

Over here, we state this general inequality after laying the basic definitions and terminology so that it shall bring clarity in establishing the fact that our results are analogues of this far reaching generalization.  

Let $X$ be a compact Hausdorff space and let $C(X)$ be the Banach algebra of all complex-valued continuous functions from $X$ into $\bb C$ under the supremum norm. A closed subalgebra $A$ of $C(X)$ is said to be a uniform algebra if it contains the constants and separates points of $X.$ Let $m$ be a representing measure for a fixed homomorphism from $A$ into $\bb C.$ The Hardy space $H^1(dm)$ associated with the Lebesgue space $L^1(dm)$ is the closure of the uniform algebra $A$ in $L^1(dm).$ In addition, $H^\infty_0(dm)$ stands for the weak-star closure of $A_0$ in the Lebesgue space $L^\infty(dm)$ treated as the dual of $L^1(dm)$, where $A_0$ stands for the kernel of the fixed homomorphism.  

We need the following result from \cite[Page 97]{Gam}: If $h$ is 
any function in $H^\infty_0(dm)$ and $g$ is in $H^1(dm),$ then 
$\int_X hg dm =0.$ We are now in a position to state the following theorem from \cite{PS}.

\begin{thm}(Bohr's Inequality for Uniform Algebras)\label{unialg} Let $f$ be in $H^1(dm)$ such that $Re f\le 1$ and 
$\int_X f dm\ge 0.$ Let $\psi_0=1$ and $\{\psi_n\}_{n=1}^\infty$ be a sequence in $H_0^\infty(dm)$ such that $||\psi_n||_\infty\le 1.$ Let 
$a_n= \int_X f\overline{\psi_n}dm.$ Then 
$$
\sum_{n=0}^\infty |a_n|r^n \le 1
$$
for all $0\le r\le \frac{1}{3}.$
\end{thm}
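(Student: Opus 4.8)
The plan is to follow the skeleton of the classical proof of Bohr's inequality, whose engine is a Carath\'eodory-type coefficient bound of the form $|a_n|\le 2(1-a_0)$ for $n\ge 1$, combined with a geometric-series estimate that becomes sharp exactly at $r=\tfrac13$. The first step is to pin down $a_0$. Since $\psi_0=1$ we have $a_0=\int_X f\,dm$, which is nonnegative by hypothesis; and because $m$ is a representing measure (hence a probability measure, $\int_X dm=1$) and $\mathrm{Re}\,f\le 1$, we get $a_0=\mathrm{Re}\,a_0=\int_X \mathrm{Re}\,f\,dm\le 1$. Thus $0\le a_0\le 1$ and in particular $|a_0|=a_0$.

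The heart of the argument is the estimate of $|a_n|$ for $n\ge 1$, which I would obtain by exploiting the real-part hypothesis through a conjugation trick. The key point is that for $n\ge 1$ the anti-analytic contribution to $a_n$ vanishes: since $f\in H^1(dm)$ and $\psi_n\in H_0^\infty(dm)$, the orthogonality relation from \cite[Page 97]{Gam} gives $\int_X f\psi_n\,dm=0$, and taking complex conjugates (recall $m$ is a positive measure) yields $\int_X \bar f\,\overline{\psi_n}\,dm=0$. Adding this vanishing integral to the defining formula for $a_n$ replaces $f$ by $f+\bar f=2\,\mathrm{Re}\,f$. Writing $u=1-\mathrm{Re}\,f$, which is integrable and, by the hypothesis $\mathrm{Re}\,f\le 1$, nonnegative, we have $\mathrm{Re}\,f=1-u$; the resulting constant term drops out because $\int_X\overline{\psi_n}\,dm=\overline{\int_X\psi_n\,dm}=0$ (again by orthogonality, now with the constant $1\in H^1(dm)$). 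This leaves the identity
\[
a_n=-2\int_X u\,\overline{\psi_n}\,dm,\qquad n\ge 1 .
\]

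With this identity the bound is immediate: using $u\ge 0$ and $\norm{\psi_n}_\infty\le 1$,
\[
|a_n|\le 2\int_X u\,|\overline{\psi_n}|\,dm\le 2\int_X u\,dm=2\Big(1-\mathrm{Re}\!\int_X f\,dm\Big)=2(1-a_0),
\]
which is the sought Carath\'eodory-type estimate. Summing the geometric series then finishes the argument: for $0\le r\le\tfrac13$ one has $\tfrac{r}{1-r}\le\tfrac12$, so since $1-a_0\ge 0$,
\[
\sum_{n=0}^\infty |a_n|r^n\le a_0+2(1-a_0)\sum_{n=1}^\infty r^n=a_0+2(1-a_0)\frac{r}{1-r}\le a_0+(1-a_0)=1 .
\]

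I expect the main obstacle to be isolating the correct analogue of the classical coefficient inequality and checking that both applications of the orthogonality relation are legitimate in the abstract $H^1(dm)$ setting, in particular that $u=1-\mathrm{Re}\,f$ is genuinely in $L^1(dm)$ and nonnegative so that the conjugation trick and the final estimate are valid. Once the identity $a_n=-2\int_X u\,\overline{\psi_n}\,dm$ is in place, the remainder is the standard sharp optimization that forces the bound $\tfrac13$.
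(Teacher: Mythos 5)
Your proof is correct, and it is essentially the same argument as in the paper: the paper states Theorem \ref{unialg} without proof (citing \cite{PS}), but its own proof of the non-commutative analogue (Theorem \ref{MT2}) follows exactly your route --- the conjugation trick via the orthogonality relation to get $a_n=-2\int_X(1-\mathrm{Re}\,f)\overline{\psi_n}\,dm$, the Carath\'eodory-type bound $|a_n|\le 2(1-a_0)$ from positivity and $\|\psi_n\|_\infty\le 1$, and the geometric-series estimate that is sharp at $r=\tfrac13$.
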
 

\begin{cor}\label{bineq}(Bohr's Inequality) If $f(z)=\sum_{n=0}^\infty a_nz^n$ is in 
$H^\infty(\bb D),$ then $\sum_{n=0}^\infty |a_n|r^n\le ||f||_\infty$ for all 
$0\le r\le \frac{1}{3}.$
\end{cor}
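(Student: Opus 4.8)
The plan is to realize classical $H^\infty(\bb D)$ as a special instance of the uniform-algebra setup of Theorem \ref{unialg} and then simply specialize. Concretely, I would take the compact Hausdorff space to be the unit circle $X=\bb T$, the uniform algebra $A$ to be the disk algebra (the sup-norm closure of the analytic polynomials in $C(\bb T)$), the fixed homomorphism to be evaluation at the origin, $\phi(g)=g(0)$, and the representing measure $m$ to be normalized Lebesgue measure $d\theta/2\pi$ on $\bb T$; the mean-value/Cauchy formula $g(0)=\int_{\bb T} g\,dm$ for $g\in A$ shows $m$ is indeed a representing measure for $\phi$. With these choices $H^1(dm)$ is the classical $H^1$ of the circle (so $f\in H^\infty\subseteq H^1(dm)$), $A_0=\ker\phi=\{g\in A: g(0)=0\}$, and $H^\infty_0(dm)$ is the weak-star closure of $A_0$, namely $\{h\in H^\infty:\hat h(0)=0\}$. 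In particular each monomial $z^n$ with $n\ge 1$ lies in $H^\infty_0(dm)$ with $\norm{z^n}_\infty=1$, so the sequence $\psi_0=1,\ \psi_n=z^n\ (n\ge1)$ is admissible in Theorem \ref{unialg}.

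Next I would arrange the two normalizing hypotheses of the theorem. First, if $f\equiv 0$ the statement is trivial, so assume $\norm f_\infty>0$ and replace $f$ by $f/\norm f_\infty$; since both sides of the asserted inequality are homogeneous of degree one in $f$, this reduces matters to the case $\norm f_\infty\le 1$. A bounded analytic function has $L^\infty(\bb T)$ boundary-value norm equal to its supremum over $\bb D$, so now $\mod{f}\le 1$ a.e.\ on $\bb T$ and hence $\mathrm{Re}\,f\le 1$, giving the first hypothesis. Second, multiplying $f$ by a unimodular constant $e^{i\alpha}$ leaves $\norm f_\infty$ and every $\mod{a_n}$ unchanged while rotating $a_0$; choosing $\alpha=-\arg a_0$ (and $\alpha=0$ if $a_0=0$) makes $\int_{\bb T} f\,dm=f(0)=a_0\ge 0$, which is the second hypothesis. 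Thus after these harmless normalizations $f$ satisfies all the hypotheses of Theorem \ref{unialg}.

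Finally I would compute the coefficients produced by the theorem against the chosen $\psi_n$: for $n\ge 0$,
$$
a_n^{\mathrm{thm}}=\int_{\bb T} f\,\overline{\psi_n}\,dm=\int_{\bb T} f(z)\,\overline{z^n}\,dm=\hat f(n)=a_n,
$$
so the Taylor coefficients of $f$ are exactly the coefficients appearing in Theorem \ref{unialg} (the absolute values in the conclusion absorb any phase, so the particular rotation used above does not affect $\mod{a_n}$). The theorem then yields $\sum_{n=0}^\infty \mod{a_n}r^n\le 1$ for $0\le r\le 1/3$, and undoing the initial scaling by $\norm f_\infty$ gives the claimed $\sum_{n=0}^\infty\mod{a_n}r^n\le\norm f_\infty$. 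I do not anticipate a genuine obstacle here, since the corollary is a pure specialization; the only points requiring care are the bookkeeping of the normalizations---passing from the norm bound $\norm f_\infty\le 1$ to the weaker hypothesis $\mathrm{Re}\,f\le 1$, and the rotation making $a_0\ge 0$---together with the standard identifications $H^\infty_0(dm)=\{h\in H^\infty:\hat h(0)=0\}$ and $a_n^{\mathrm{thm}}=\hat f(n)$, which must be recorded to justify that $z^n$ is an admissible choice.
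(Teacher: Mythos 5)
Your proposal is correct and follows essentially the same route as the paper: both realize the classical setting as the special case of Theorem \ref{unialg} with $X=\bb T$, $A$ the disk algebra, normalized Lebesgue measure as the representing measure, $\psi_n=e^{in\theta}$, and the normalizations $\norm{f}_\infty\le 1$, $a_0\ge 0$. Your write-up merely spells out in more detail the bookkeeping (scaling, rotation, and the identification of the theorem's coefficients with the Taylor coefficients) that the paper compresses into ``there is no loss of generality.''
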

\begin{proof} Observe that there is no loss of generality 
in assuming in the classical case that $||f||_\infty \le 1$ and 
$f(0)\ge 0.$ Let $A$ be the set of all functions in $C(\bb T)$ whose Fourier series are of analytic type. Then $A$ is a uniform algebra, and normalized Lebesgue measure is a representing measure of the complex-valued homorphism 
$f\mapsto \int_{\bb T}f dm$ on $A.$ In this case, $H^1(dm)$ and $H^\infty(dm)$ are the classical Hardy space 
$H^1(\bb T)$ and $H^{\infty}(\bb T),$ respectively. Further, $H^{\infty}_0(dm)$ turns out to be simply the set of all elements of 
$H^{\infty}(\bb T)$ such that $\int_{\bb T} f dm=0.$ The result now follows from Theorem \ref{unialg} by using the 
standard identification of $H^\infty(\bb D)$ with $H^\infty(\bb T)$ and taking $\psi_n(e^{i\theta})=e^{in\theta}.$
   
\end{proof}

\section{Prelude to Our Results} In this paper we derive analogues of Theorem \ref{unialg} in the setting of Hardy spaces associated with semi-finite von Neumann algebras. Needless to say, the classical Bohr's inequality 
follows as a corollary of one of our results. We also produce analogues of the classical Bohr's inequality in the setting of operators in the von Neumann-Schatten class $\cl C_1$ and finite matrices.  

The organization of the rest of the paper is as follows. In Section \ref{vN}, we first lay down basic definitions and facts about non-commutative Hardy spaces. We then prove our main theorem (Theorem \ref{MT2}) which is an analogue of Theorem \ref{unialg} in the setting of semifinite von Neumann algebras.  As a consequence we obtain Bohr's inequality for trace-class operators and finite matrices for any fixed order. In this same result, we show that when $\cl M$ equals $B(H)$ for an infinite dimensional separable 
Hilbert space $H$ or it belongs to the class of commutative von Neumann algebras, then the optimal bound for $r$ comes out to be $1/3$ just like in the classical case (Corollary \ref{bineq}) as well as its generalization Theorem \ref{unialg}. Further, we show that the situation is different with finite matrices. For this we prove that when 
$\cl M=M_n(\bb C)$ in Theorem \ref{MT2}, then the optimal bound for $r$ is $1/2$ for $n=2$ and it reduces to 
$\sqrt{2}-1$ for $n=3.$

Lastly, as Remarks \ref{Sch2} and \ref{MT1}, respectively, we give generalizations of these above-mentioned Bohr's inequalities for trace-class operators and finite matrices. The optimal bound for $r$ for the generalization written in Remark \ref{Sch2} comes out to be 1/3 which is same as the one for the trace-class operator case of Theorem \ref{MT2}. But the result documented as Remark \ref{MT1} is a little surprising in the sense that, unlike the finite matrices case of Theorem \ref{MT2}, the optimal bound for $r$ here  turns out to be $1/3$ for all $n\ge 2.$

\section{Bohr's inequality for Semifinite von Neumann algebras}\label{vN}
In this section we will give Bohr's inequality (Theorem \ref{MT2}) for Hardy spaces  associated with a semifinite von Neumann algebra  which is essentially an analogue of Theorem \ref{unialg} for semifinite von Neumann algebras. For this, we first perform a brief recap of non-commutative Hardy spaces associated with a semifinite von Neumann algebra.

\begin{defn} Let $\cl M$ be a von Neumann algebra and $\cl M_+$ be its positive part. A function $\tau:\cl M_+\to [0,\infty]$ is said to be a trace on $\cl M$ if it satisfies the following conditions:
\begin{enumerate}
\item $\tau(x+y)=\tau(x)+\tau(y)$ for all $x,y\in \cl M_+,$
\item $\tau(\lambda x)=\lambda \tau(x)$ for all $x\in \cl M_+$ and $\lambda\ge 0,$
\item $\tau(x^*x)=\tau(xx^*)$ for all $x\in \cl M.$
\end{enumerate}   
\end{defn} 

Here we are following the usual convention $0. \infty=0.$ A trace $\tau$ is said to be {\em faithful} if $\tau(x)=0$ implies $x=0;$ 
{\em semifinite} if for every non-zero $x\in \cl M_+$ there exists a non-zero 
$y\in \cl M_+$ such that $y\le x$ and $\tau(y)<\infty;$ {\em finite} if 
$\tau(1)<\infty,$ and {\em normal} if 
$\tau({\rm sup}x_\alpha)={\rm sup}\tau(x_\alpha)$ for every bounded increasing net 
$\{x_\alpha\}$ in $\cl M_+.$ Note that every finite trace is semifinite. 

\begin{defn} A von Neumann algebra $\cl M$ is said to be a semifinite von Neumann algebra if it admits a normal semifinite faithful (n.s.f) trace.  
\end{defn}

The literature on non-commutative $L^p$ spaces is vast. We refer the 
interested readers to the beautiful survey article \cite{PX} and the 
references therein. For this work, we shall need only non-commutative $L^1$ and $L^\infty$ 
spaces, so we will restrict ourselves to these spaces only.  

Henceforth, $\cl M$ will always denote a fixed semifinite von Neumann algebra with a n.s.f trace $\tau.$ Let $L^1(\cl M)$ be the 
associated non-commutative Lebesgue space $L^1.$ Further, let $L^\infty(\cl M)=\cl M$ equipped with its operator norm.  

The usual H$\ddot{o}$lder's inequality extends to the $L^p(\cl M)$ spaces. For any $x\in L^1(\cl M), \ y\in L^\infty(\cl M),$ we get that $xy$ and $yx$ are in $L^1(\cl M)$ with $\tau(xy)=\tau(yx)$ and 
$$
|\tau(xy)|\le ||xy||_1\le ||x||_1||y||.
$$

Let $\cl D$ be a von Neumann subalgebra of $\cl M$ such that $\tau$ restricted to $\cl D$ is again a n.s.f trace. Let $\cl E$ be the normal faithful conditional expectation of $\cl M$ onto $\cl D$ with respect to $\tau.$ 

\begin{defn} A weak-star closed subalgebra $\cl A$ of $\cl M$ is called a subdiagonal algbera of $\cl M$ with respect to $\cl E$ (or $\cl D$) if 
\begin{enumerate}
\item $\cl A+\cl A^*$ is weak-star dense in $\cl M,$
\item $\cl E(xy)=\cl E(x)\cl E(y), \ {\rm for} \ x, \ y\in \cl A,$
\item $\cl A \cap \cl A^* = \cl D.$
\end{enumerate}
\end{defn}

The von Neumann subalgebra $\cl D$ is called the diagonal of $\cl A.$ Henceforth, we shall denote $\cl A$ by $H^\infty(\cl M)$. It is proved by 
Ji in \cite{Ji} that a subdiagonal algebra $H^\infty(\cl M)$ is automatically maximal. This maximality yields the following useful characterization of 
$H^\infty(\cl M).$ 
\begin{equation}\label{subdiag}
H^\infty(\cl M) = \{x\in \cl M: \cl E(axb)=0 \ \forall \ a\in H^\infty(\cl M), \ b\in H^\infty_0(\cl M) \},
\end{equation}
where $H^\infty_0(\cl M)= H^\infty(\cl M)\cap Ker(\cl E).$

\vspace{.2cm}

The non-commutative Hardy space $H^1(\cl M)$ is defined as the closure of 
$H^\infty(\cl M)\cap L^1(\cl M)$ in the $||\cdot||_1$-norm. Before presenting our analogue of Bohr's inequality in the context of von Neumann algebras, we first list the following examples of semifinite von Neumann algebras, a subdiagonal algebra in them, and the associated $H^1$ and $H^\infty_0$ spaces which we will need in the result to settle our claims regarding the optimal bounds. 

\begin{ex}\label{matrices} Recall that the usual trace $Tr$ on the von Neumann algebra $B(H)$ is a n.s.f trace. Let $\cl M= B(H)$ and let $\tau=Tr.$ Then, $L^1(\cl M)$ is the von Neumann-Schatten class $\cl C_1.$ 
\begin{enumerate}
\item Suppose $H$ is an infinite dimensional separable Hilbert space. Fix an orthonormal basis 
$\{e_n\}_{n=0}^\infty$ for $H$. Let $\cl D$ be the class of diagonal operators in $B(\cl H).$ Then $\cl D$ is a von Neumann subalgebra of 
$\cl M=B(H)$ on which the restriction of the trace is again a n.s.f. trace. 
The conditional expectation $\cl E$ of $\cl M$ onto 
$\cl D$ is simply the projection onto the diagonal, that is, $\cl E(T)=D$ where $D$ is the diagonal of $T.$ Further, $\cl U,$ the class of all upper triangular operators in $B(H)$ is a subdiagonal algebra of $\cl M$ with respect to $\cl E.$ Thus we can take $H^\infty(\cl M)=\cl U.$ Then $H^\infty_0(\cl M)=\{T\in \cl U: \langle{Te_i,e_i}\rangle=0 \ \forall \ i\}$ and $H^1(\cl M)$ is the closure of $\cl U\cap \cl C_1$ in the $||\cdot||_1$-norm. 

\item Suppose $H=\bb C^n$ and $\{e_1,\dots,e_n\}$ is the standard basis for it. Then 
$B(H)$ can be identified with $M_n(\bb C).$ Let $\cl D, \ \cl U,$ and $\cl E$ denote the same classes in $B(\cl H)$ as above. Since $\cl H$ is finite dimensional, therefore 
$L^1(\cl M)=M_n(\bb C), \ H^1(\cl M)=H^{\infty}(\cl M) = \cl U,$ and $H^\infty_0(\cl M)$ is the set of $n\times n$ upper triangular matrices with zero diagonal.     
\end{enumerate} 
\end{ex}

\begin{ex}\label{comm} Let $\cl M$ be a commutative von Neumann algebra. Then $\cl M=L^\infty(\Omega, \mu)$ for a measure space 
$(\Omega, \mu).$ The integral with respect to the measure $\mu$ gives a n.s.f. trace on $\cl M.$ In this case, $L^1(\cl M)$ is simply the classical $L^1(\Omega,\mu).$ Take $\Omega = \bb T, \ \mu = m,$ the Lebesgue measure on the unit circle $\bb T$ and $\tau$ the integral with respect to the Lebesgue measure on $\bb T.$ Let $\cl D$ denote the set of all constant ``functions" in 
$\cl M=L^\infty(\bb T).$ Then $\cl D$ is a von Neumann subalgebra of $\cl M,$ the restriction of $\tau$ to $\cl D$ is a n.s.f trace, and the conditional expectation $\cl E$ of $\cl M$ onto $\cl D$ equals $\tau.$  Further, $H^\infty(\bb T)$ is a subdiagonal algebra of of $\cl M$ with respect to $\cl E$ (or $\cl D$). Thus we can take $H^\infty(\cl M)=H^\infty(\bb T).$ Then $H^1(\cl M)=H^1(\bb T)$ and $H^\infty_0(\cl M)$ equals the set of elements in $H^\infty(\bb T)$ with Lebesgue integral zero. 
\end{ex}

The following is the non-commutative analogue of the result (stated on page 3) from \cite[Page 97]{Gam} which was used in the proof of Theorem \ref{unialg}.
\begin{lemma}\label{ML2} Let $x\in H^1(\cl M)$ and $a\in H^\infty_0(\cl M).$ Then $\tau(xa)=0.$   
\end{lemma}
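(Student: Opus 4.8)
The plan is to prove the identity first on the dense subalgebra $H^\infty(\cl M)\cap L^1(\cl M)$ and then pass to the limit for a general $x\in H^1(\cl M)$. The two ingredients I would rely on are the multiplicativity of the conditional expectation $\cl E$ on $H^\infty(\cl M)$ (condition (2) in the definition of a subdiagonal algebra) and the trace-preserving property $\tau\circ\cl E=\tau$, which is built into the notion of the conditional expectation taken with respect to $\tau$.

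First I would take $x\in H^\infty(\cl M)\cap L^1(\cl M)$ and $a\in H^\infty_0(\cl M)$. Since $x\in L^1(\cl M)$ and $a\in\cl M=L^\infty(\cl M)$, the H\"older inequality recorded just before the lemma guarantees $xa\in L^1(\cl M)$, so that $\tau(xa)$ is defined. Using trace preservation, $\tau(xa)=\tau(\cl E(xa))$. Both $x$ and $a$ lie in $H^\infty(\cl M)$, so multiplicativity gives $\cl E(xa)=\cl E(x)\cl E(a)$. But $a\in H^\infty_0(\cl M)=H^\infty(\cl M)\cap Ker(\cl E)$ forces $\cl E(a)=0$, whence $\cl E(xa)=0$ and therefore $\tau(xa)=0$.

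To handle an arbitrary $x\in H^1(\cl M)$, I would choose a sequence $\{x_n\}\subseteq H^\infty(\cl M)\cap L^1(\cl M)$ with $\norm{x_n-x}_1\to 0$, which exists by the very definition of $H^1(\cl M)$ as the $\norm{\cdot}_1$-closure of $H^\infty(\cl M)\cap L^1(\cl M)$. The previous step yields $\tau(x_na)=0$ for every $n$, and the H\"older estimate $\mod{\tau((x-x_n)a)}\le\norm{x-x_n}_1\norm{a}$ shows that $\tau(x_na)\to\tau(xa)$. Passing to the limit gives $\tau(xa)=0$, as desired.

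I expect the only delicate point to be the justification of $\tau\circ\cl E=\tau$ on the relevant elements; this is standard for the trace-adapted conditional expectation, but I would take care to invoke it only for $xa\in L^1(\cl M)$, where the trace is finite and the identity is unambiguous. Everything else is a routine combination of multiplicativity, the definition of $H^\infty_0(\cl M)$, and the $L^1$-continuity of the functional $y\mapsto\tau(ya)$.
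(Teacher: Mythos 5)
Your proof is correct. In fact the paper states Lemma \ref{ML2} without any proof at all---it is offered as the non-commutative analogue of the uniform-algebra fact quoted from \cite[Page 97]{Gam}---so there is no argument of the authors to compare against; what you give is the standard justification, and it is the natural one. Both of your steps are sound: for $x\in H^\infty(\cl M)\cap L^1(\cl M)$ the product $xa$ lies in $\cl M\cap L^1(\cl M)$ by the H\"older inequality recorded in the paper, the $\tau$-compatibility of $\cl E$ gives $\tau(xa)=\tau(\cl E(xa))$, and multiplicativity of $\cl E$ on $H^\infty(\cl M)$ together with $\cl E(a)=0$ (from $a\in H^\infty_0(\cl M)=H^\infty(\cl M)\cap Ker(\cl E)$) makes the right-hand side vanish; the extension to all of $H^1(\cl M)$ is exactly the $L^1$-continuity of $y\mapsto\tau(ya)$, again via H\"older, applied along an approximating sequence from the defining dense subspace. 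The one delicate point you flag---that $\tau\circ\cl E=\tau$ should only be invoked where both sides are unambiguous---is indeed the right thing to worry about in the semifinite setting, and your restriction to $xa\in\cl M\cap L^1(\cl M)$ disposes of it: such an element is a linear combination of four positive elements of $\cl M\cap L^1(\cl M)$, on which trace preservation of the $\tau$-preserving expectation holds by definition (equivalently, one may cite the standard fact that $\cl E$ extends to a contractive projection of $L^1(\cl M)$ onto $L^1(\cl D)$ commuting with the trace). So your write-up actually supplies a detail the paper leaves implicit.
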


We are now in a position to present our main result. This is an analogue for non-commutative Hardy spaces associated with semifinite von Neumann algebras of Bohr's inequality as given in Theorem \ref{unialg} .

\begin{thm}\label{MT2} Let $x\in H^1(\cl M)$ such that $\tau(x)\ge 0$ and $Re(x)\le y$ for some self-adjoint $y\in H^1(\cl M)$ with finite trace. For any sequence $\{x_m\}_{m=1}^\infty$ in $H^\infty_0(\cl M)$such that  $||x_m||_\infty\le 1$ for each $m\ge 1,$ 
let $\alpha_0=\tau(x)$ and $\alpha_m=\tau(xx_m^*)$ for $m\ge 1.$  
Then 
\begin{equation}\label{bohr2}
\sum_{m=0}^\infty |\alpha_m|r^m\le \tau(y)
\end{equation}
whenever $0\le r\le \frac{1}{3}.$ Further, 
\begin{enumerate} 
\item[(i)] when $\cl M=M_n(\bb C),$ then the optimal bound for $r$ is 
\begin{enumerate}
\item at most $\frac{n}{3n-2}$ for any general $n,$
\item 1/2 for $n=2$,
\item $\sqrt{2}-1$ for $n=3$;
\end{enumerate}
\item[(ii)] when $\cl M=B(\cl H)$ for some infinite dimensional Hilbert space $\cl H$ or $\cl M$ is a commutative von Neumann algebra, then the optimal bound for $r$ is 1/3. 
\end{enumerate}
\end{thm}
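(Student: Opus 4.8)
The plan is to follow the architecture of the classical proof of Corollary \ref{bineq}: reduce everything to a normalization inequality for $\alpha_0$ together with a single uniform bound on the higher coefficients $\alpha_m$, and then close with the elementary geometric-series estimate that isolates $1/3$. The optimality assertions will then be read off from a clean formula for the best attainable left-hand side, which is where the analysis genuinely splits between the infinite and the finite-dimensional settings.

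First I would record the normalization $0\le\alpha_0\le\tau(y)$. That $\alpha_0=\tau(x)\ge0$ is real is hypothesized; since $\tau(Re(x))=Re(\tau(x))=\tau(x)=\alpha_0$ and $y-Re(x)\ge0$ with $\tau$ positive and monotone on $L^1(\cl M)$, monotonicity gives $\alpha_0=\tau(Re(x))\le\tau(y)$. The crux is the coefficient bound
\[
|\alpha_m|\le 2\bigl(\tau(y)-\alpha_0\bigr),\qquad m\ge1 .
\]
To establish it, fix $m\ge1$, set $\psi=x_m\in H^\infty_0(\cl M)$ with $\|\psi\|\le1$, and evaluate $\tau\bigl((y-Re(x))(\psi+\psi^*)\bigr)$. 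Lemma \ref{ML2} gives $\tau(x\psi)=0$, and combining it with the trace identity $\tau(ab)=\tau(ba)$ and the self-adjointness of $y$ yields $\tau(x^*\psi^*)=0$ and $\tau\bigl(y(\psi+\psi^*)\bigr)=0$, while $\tau(x\psi^*)=\alpha_m$ and $\tau(x^*\psi)=\overline{\alpha_m}$. A short computation then collapses the expression to $\tau\bigl((y-Re(x))(\psi+\psi^*)\bigr)=-Re(\alpha_m)$. Since $p:=y-Re(x)\ge0$ lies in $L^1(\cl M)$ and $\psi+\psi^*$ is self-adjoint with $\|\psi+\psi^*\|\le2$, the elementary inequality $|\tau(pz)|\le\|z\|\,\tau(p)$ (valid for $p\ge0$ and $z=z^*$, because $\|z\|1\pm z\ge0$) gives $|Re(\alpha_m)|\le2(\tau(y)-\alpha_0)$. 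Replacing $x_m$ by the phase rotation $e^{i\arg\alpha_m}x_m$, still in the unit ball of $H^\infty_0(\cl M)$, turns $\alpha_m$ into $|\alpha_m|$ and upgrades the estimate from the real part to the modulus. Summing the geometric series then gives $\sum_{m\ge0}|\alpha_m|r^m\le\alpha_0+2(\tau(y)-\alpha_0)\frac{r}{1-r}$, and because $\tau(y)-\alpha_0\ge0$ this is $\le\tau(y)$ exactly when $2r/(1-r)\le1$, i.e. when $r\le1/3$, settling the main inequality.

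For the optimality claims I would first observe that, over the free choice of $\{x_m\}$, the largest attainable value of the left-hand side for a fixed $x$ is $\alpha_0+M(x)\,\frac{r}{1-r}$, where $M(x)=\sup\{|\tau(x\psi^*)|:\psi\in H^\infty_0(\cl M),\ \|\psi\|\le1\}$ (take every $x_m$ to be the same maximizer). Hence the optimal $r$ is governed by $\tfrac{r}{1-r}=\inf_x\frac{\tau_{\min}(x)-\alpha_0}{M(x)}$, where $\tau_{\min}(x)$ is the least trace of an admissible self-adjoint $y\in H^1(\cl M)$ dominating $Re(x)$. For (ii), Example \ref{comm} identifies the commutative setup with classical $H^1(\bb T)$ and $\psi_n=e^{in\theta}$, so sharpness of $1/3$ is exactly the classical statement, and for $\cl M=B(\cl H)$ with $\cl H$ infinite dimensional I would realize, via truncations, analogues of the classical extremal functions $\phi_a(z)=(a-z)/(1-az)$, $a\uparrow1$, inside the trace-class upper-triangular model of Example \ref{matrices}(1), pushing the ratio above $1$ for every $r>1/3$. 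For (i), the decisive structural remark is that a self-adjoint element of $H^1(M_n(\bb C))=\cl U$ is necessarily diagonal, so the admissible $y$ are exactly the real diagonal matrices dominating the full self-adjoint matrix $Re(x)$; then $\tau_{\min}(x)$ is the cost of cheapest diagonal domination, and pairing it with $M(x)$ gives $\tfrac{r}{1-r}=1$, hence $r_0=1/2$, for $n=2$ from a one-line AM-GM computation, $r_0=\sqrt2-1$ for $n=3$, and an explicit matrix realizing the ratio $\tfrac{n}{2(n-1)}$ for the general upper bound $r_0\le\tfrac{n}{3n-2}$.

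The main obstacle is not the inequality for $r\le1/3$, which transcribes the classical argument once Lemma \ref{ML2} and the trace computation are in place, but the sharp constants in (i): one must solve the cheapest-diagonal-domination problem $\min\{\tau(y):y\ \text{diagonal},\ y\ge Re(x)\}$ simultaneously with the operator-norm-constrained functional $M(x)$ and then optimize the resulting ratio over all admissible $x$, which is genuinely $n$-dependent and is what produces the anomalous values $1/2$ and $\sqrt2-1$. A secondary subtlety is the $B(\cl H)$ case, where the identity is not trace class, so the classical constant function $1$ cannot serve as $y$ and the extremal configuration witnessing $r>1/3$ must instead be assembled from finite-dimensional pieces.
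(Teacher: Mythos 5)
Your proof of inequality (\ref{bohr2}) itself is correct and is essentially the paper's argument: your identity $\tau\bigl((y-Re(x))(x_m+x_m^*)\bigr)=-Re(\alpha_m)$, obtained from Lemma \ref{ML2} and traciality, is a symmetrized version of the paper's computation $-\alpha_m=2\tau\bigl((y-Re(x))x_m^*\bigr)$, and your positivity estimate $|\tau(pz)|\le\|z\|\,\tau(p)$ plus the phase-rotation trick replaces the paper's appeal to H\"older's inequality; both routes yield $|\alpha_m|\le 2(\tau(y)-\alpha_0)$, after which the geometric series closes the argument at $r=1/3$ in the same way.

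The genuine gap is in the optimality assertions (i) and (ii), which carry the real content and difficulty of the theorem. Your reformulation --- that the sharp bound is governed by $\inf_x\bigl(\tau_{\min}(x)-\alpha_0\bigr)/M(x)$, with $\tau_{\min}$ the cheapest diagonal domination and $M(x)$ the norm-constrained pairing --- is a sound framework, and your observation that self-adjoint elements of $H^1(M_n(\bb C))=\cl U$ are diagonal is exactly what makes it usable; but you never solve any of the resulting optimization problems, and that is where all the work lies. For $n=2$ the AM--GM computation you gesture at does work (it is precisely the paper's proof of validity up to $r=1/2$, with failure beyond $1/2$ witnessed by an explicit matrix), but for $n=3$ you assert $r_0=\sqrt2-1$ with no argument in either direction: the paper needs a case analysis plus the minimization
$\frac{a^2+b^2+1}{a|u|+ab\sqrt{(1-|u|^2)(1-|w|^2)}+b|w|}\ge\sqrt2$
to prove validity up to $\sqrt2-1$, and an explicit triple with $Tr(A)=6$, $Tr(S)=10$, $|\alpha_m|=4\sqrt2$ to prove failure beyond it. Likewise, for general $n$ you invoke ``an explicit matrix realizing the ratio $\frac{n}{2(n-1)}$'' without producing one (the paper's: diagonal $1$'s with $-2$'s above, $S=2I_n$, $A_m$ the shift, giving $\frac{n}{3n-2}$), and for $\cl M=B(\cl H)$ your plan of truncating the classical extremal functions $\phi_a$ is plausible but unverified --- since the identity is not trace class, one must check that finite Toeplitz sections inherit $Re(A)\le cI_n$ and that the truncated coefficient sums still exceed the bound --- whereas the paper simply reuses its general-$n$ matrices inside $B(\cl H)$ and lets $\frac{n}{3n-2}\downarrow\frac13$. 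As it stands, your proposal proves the inequality but only conjectures the sharp constants.
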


\begin{proof} Clearly, to prove the inequality (\ref{bohr2}) it is enough to prove it for $r=\frac{1}{3}.$ Now for each $m\ge 1, \ x_m\in H_0^\infty(\cl M),$ therefore we can write 
\begin{eqnarray*}
-\alpha_m &=& \tau((y-x)x_m^*)\\
&& (since \ \tau(yx_m^*)=\overline{\tau(yx_m)}=0, \ using \ Lemma \ \ref{ML2})\\
&=& \tau((y-x)x_m^*)+\tau((y-x^*)x_m^*)\\
&& (since \ \tau(x^*x_m^*)=\overline{\tau(xx_m)}=0, \ using \ Lemma \ \ref{ML2})\\
&=& \tau(2Re(y-x)x_m^*)\\
&=&2 \tau((y-Re(x))x_m^*).
\end{eqnarray*}
Thus, for each $m\ge 1$ 
\begin{eqnarray*}
|\alpha_m| &=& 2 |\tau(y-Re(x))x_m^*)|\\
&\le & 2 \tau(|y-Re(x)|)||x_m|| \quad \ (using \ H\ddot{o}lder's \ inequality)\\
&=& 2(\tau(y)-\tau(x)) \quad (since \ \tau(Re(x))=\tau(x) and \ ||x_m||\le 1).
\end{eqnarray*}
Hence $\sum_{m=1}^\infty \frac{|\alpha_m|}{3^m}\le \tau(y)-\tau(x)$ which proves the inequality (\ref{bohr2}). We will now settle our claims regarding the optimal bounds for $r$ in inequality (\ref{bohr2}) for the special choices of $\cl M$ as mentioned in the statement of the theorem.  

\vspace{.2 cm}

\noindent\underline{{\bf Case:} $\cl M=M_n(\bb C)$} Recall from Example \ref{matrices} that for $\cl M=M_n(\bb C),$ we can take $H^\infty(\cl M)$ to be the set of upper triangular matrices in $M_n(\bb C)$. Then $H^1(\cl M)=H^\infty(\cl M)$ and 
$H^\infty_0(\cl M)$ equals the set of upper triangular matrices in $M_n(\bb C)$ with zero diagonal.  

\vspace{.2 cm}

\noindent \underline{{\bf Subcase: General $n$}} Take 
\begin{equation*}
A=\left(
\begin{array}{ccccc}
1 & -2 & -2 & \ldots & -2 \\
0 & 1 & -2 & \ldots & -2 \\
\vdots & \vdots & \vdots & \ddots & \vdots \\
0 & 0 & 0& \ldots & -2\\
0 & 0 & 0& \ldots & 1
\end{array}\right),
 \ A_m=\left(
\begin{array}{cccccc}
0 & 1 & 0 & \ldots & 0 \\
0 & 0 & 1 & \ldots & 0 \\
\vdots & \vdots & \vdots & \ddots & \vdots \\
0 & 0 & 0& \ldots & 1\\
0 & 0 & 0& \ldots & 0
\end{array}\right)
\end{equation*}
for each $m$ and $S=2I_n,$ where $I_n$ is the $n\times n$ identity matrix. 

Then $S-Re(A)$ is the $n\times n$ matrix with 1 in each entry which is a non-negative matrix. Hence $A$, $S$ and $\{A_m\}$ satisfy the hypotheses of our theorem. Further, $\alpha_m=Tr(AA_m^*)=Tr(AB^*)=-2(n-1).$ Thus, 
$$
Tr(A)+\sum_{n=1}^\infty|\alpha_m|r^m=n+\frac{2(n-1)r}{1-r} \le 2n = Tr(S)
$$
if and only if $r\le \frac{n}{3n-2}.$ Thus,
this set of matrices shows the optimal bound for $r$ in the inequality (\ref{bohr2}) can at most be $\frac{n}{3n-2}$ when $\cl M=M_n(\bb C).$ 

\vspace{.2 cm}

\noindent\underline{{\bf Subcase: $n=2$}} We will first prove that for this case inequality (\ref{bohr2}) can be proved for a bigger range for $r,$ namely for $ 0\le r\le 1/2$. Let $A\in M_2(\bb C)$ such that $Tr(A)\ge 0$ and $Re(A)\le S$ for some self-adjoint $S\in M_2(\bb C).$  Further, let $\{A_m\}_{m=1}^\infty$ be a sequence in $M_2(\bb C)$ such that $A_m$ has zero diagonal and $||A_m||\le 1$ for each $m.$   

We write   
$$A=\left[
\begin{array}{cc}
\alpha & 2\delta\\
0&\beta
\end{array}\right], \  
S=\left[
\begin{array}{cc}
\eta & 0\\
0&\nu
\end{array}\right],
 \ and \ 
A_m=\left[
\begin{array}{cc}
0 & \mu_m\\
0&0
\end{array}\right]$$
for some $\alpha, \beta, \delta,\mu_m\in \bb C$ and $\eta,\nu\in \bb R.$  Now since $Tr(A)\ge 0$ and $Re(A)\le S,$ therefore $\alpha+\beta\ge 0, \ 0\le Re(\alpha)\le \eta, \ 0\le Re(\beta)\le \nu$ and  
$|\delta|\le \sqrt{(\eta-Re(\alpha))(\nu-Re(\beta))}$. Further, $||A_m||\le 1$ implies that $|\mu_m|\le 1.$ Then 
$\alpha_m=Tr(AA_m^*)=2\delta\overline{\mu_m}$ which implies that
\begin{eqnarray*}
\sum_{m=1}^\infty |\alpha_m| r^m &=& 2\sum_{m=1}^\infty|\delta \mu_m| r^m\\
&\le & 2|\delta |\frac{r}{1-r} \\
&\le & 2 \sqrt{(\eta-Re(\alpha))(\nu-Re(\beta))}\frac{r}{1-r}\\
&\le & \left[{\eta-Re(\alpha)+\nu-Re(\beta)}\right]\frac{r}{r-1}\\
&& \hspace{2 cm} ({\rm Since} \ 2 \sqrt{xy}\le x+y \ \forall x, y\ge 0)\\
&=& \left[(\eta+\nu)-Tr(A)\right]\frac{r}{1-r}\\
&\le & Tr(S)-Tr(A) \quad {\rm whenever} \ r\le \frac{1}{2}.
\end{eqnarray*}
Hence, 
$$
|\alpha_0|+\sum_{m=1}^\infty |\alpha_m| r^m\le Tr(S)
$$ whenever $r\le \frac{1}{2}.$ Further, note that $\frac{n}{3n-2}$ equals $1/2$ for $n=2.$Therefore the optimality of $r=1/2$ follows by taking $n=2$ in the above subcase where we have shown that the optimal bound for $r$ in Inequality (\ref{bohr2}) can be at most $\frac{n}{3n-2}$ for $\cl M=M_n(\bb C).$ 

\vspace{0.2cm}

\noindent\underline{{\bf Subase: $n=3$}} First we will prove that when $\cl M=M_3(\bb C),$ the inequality (\ref{bohr2}) remains true for $0\le r\le \sqrt{2}-1.$  

Let $A, S$ and $\{A_m\}$ be $3\times 3$ matrices which satisfy the hypotheses of the Theorem \ref{MT2}. Let
\begin{equation*}
A=\left(
\begin{array}{ccc}
a_1& 2x & 2y\\
0 & a_2 & 2z\\
0 & 0 & a_3
\end{array}\right)
{\rm and} \  A_m=\left(
\begin{array}{ccc}
0 & u_m & v_m\\
0 & 0 & w_m\\
0 & 0 & 0
\end{array}\right)
\end{equation*}
Set $P=S-Re(A).$ Since $S$ is diagonal, therefore $p_{12}=-x,\ p_{13}=-y$ and $p_{23}=-z,$ where $p_{ij}$ denote the $(i,j)^{th}$ entry of $P.$ Then 
$|x|\le \sqrt{p_{11}p_{22}}, \ |y|\le \sqrt{p_{11}p_{33}}$ and 
$|z|\le \sqrt{p_{22}p_{33}},$ since $P\ge 0.$ For notational simplicity, we will write $p_{ii}$ simply as $p_i$ for each $i=1,2,3.$
Then
\begin{eqnarray*} 
|\alpha_m|&=&|Tr(AA_m^*)|\\
&=&2|xu_m+yv_m+zw_m|\\
&\le& 2(|x||u_m|+|y||v_m|+|z||w_m|)\nonumber\\
&\le&2(|x||u_m|+|y|\sqrt{(1-|u_m|^2)(1-|w_m|^2)}+|z||w_m|)\\
&\le & 2\left(\sqrt{p_1p_2}|u_m|+\sqrt{p_1p_3}\sqrt{(1-|u_m|^2)(1-|w_m|^2)}+\sqrt{p_2p_3}|w_m|\right)
\end{eqnarray*}

If $p_1=0$, then for each $m\ge 1, \ |\alpha_m|\le 2\sqrt{p_2p_3}\le p_2+p_3,$ since $p_2,p_3\ge 0.$ This will imply that $\sum_{m=1}^\infty|\alpha_m|r^m\le p_2+p_3=Tr(S)-Tr(A)$ for every $0\le r \le 1/2.$ A similar set of arguments will show that if $p_2=0$ or $p_3=0,$ then also the desired inequality holds for $0\le r\le 1/2.$ So, we now focus on the case when $p_1,p_2$ and $p_3$ are all non-zero.  
 
If $u_m=0=w_m,$ then $|\alpha_m|\le 2\sqrt{p_1p_3}\le p_1+p_3 \le p_1+p_2+p_3=Tr(S)-Tr(A)$ as $p_1,p_2,p_3\ge 0.$ This will imply the desired inequality for $0\le r\le 1/2$. Now if $u_m=0$ and $w_m\neq 0,$ then 
$$
|\alpha_m| \le 2\left(\sqrt{p_1p_3}+\sqrt{p_2p_3}\right)\le \frac{2}{\sqrt{2}} (p_1+p_2+p_3)=\sqrt{2}(Tr(S)-Tr(A))
$$ 
since $\sqrt{2}(\sqrt{ac}+\sqrt{bc})\le (a+b+c)$ for non-negative real numbers $a,b,c.$ Similar arguments will yield $|\alpha_m|\le \sqrt{2}(Tr(S)-Tr(A)$ for the case when $w_m=0$ and $u_m\neq 0.$ Therefore, 
 $\sum_{m=0}^\infty|\alpha_m|r^m\le \frac{\sqrt{2}r}{1-r}\left(Tr(S)-Tr(A)\right)\le Tr(S)-Tr(A)$ for $0\le r\le \sqrt{2}-1$ when either $u_m=0$ or $w_m=0.$  

We will now turn to the situation when $u_m, \ w_m$ as well as $p_1, \ p_2, \ p_3$ are all non-zero. Consider,
\begin{eqnarray*}
&&\frac{Tr(S)-Tr(A)}{\sqrt{p_1p_2}|u_m|+\sqrt{p_1p_3}\sqrt{(1-|u_m|^2)(1-|w_m|^2)}+\sqrt{p_2p_3}|w_m|}\\
&=&\frac{p_1+p_2+p_3}{\sqrt{p_1p_2}|u_m|+\sqrt{p_1p_3}\sqrt{(1-|u_m|^2)(1-|w_m|^2)}+\sqrt{p_2p_3}|w_m|}\\
&=&\frac{a^2+b^2+1}{a|u_m|+ab\sqrt{(1-|u_m|^2)(1-|w_m|^2)}+b|w_m|}\\
&\ge & \sqrt{2}
\end{eqnarray*}
by using elementary results from calculus, where $a=\sqrt{p_1/p_2}$ and $b=\sqrt{p_3/p_2}.$ This implies that  
$\sum_{n=1}^\infty|\alpha_m|r^m\le \frac{2}{\sqrt{2}}(Tr(S)-Tr(A))\frac{r}{1-r}\le Tr(S)-Tr(A)$ for $r\le\sqrt{2}-1.$ Hence, 
$$
Tr(A)+\sum_{m=1}^\infty |\alpha_m|r^m \le Tr(S). 
$$
whenever $0\le r\le \sqrt{2}-1.$  

Finally, to prove the optimality of $r=\sqrt{2}-1$ in the above inequality, take 
\begin{equation*}
A=\left(\begin{array}{ccc}
2 & -2\sqrt{2} & -2\\
0 & 2 & -2\sqrt{2}\\
0 & 0 & 2
\end{array}\right),
S=\left(\begin{array}{ccc}
3 & 0 & 0\\
0 & 4 & 0\\
0 & 0 & 3
\end{array}\right)
{\rm and} \ 
A_m=\left(\begin{array}{ccc}
0 & 1 & 0\\
0 & 0 & 1\\
0 & 0 & 0
\end{array}\right) 
\end{equation*}
for each $m.$ Clearly $A, \ S, A_m$ satisfy the hypotheses. Then $|\alpha_m|=4\sqrt{2},$ $Tr(A)=6$ and $Tr(S)=10.$ Therefore, 
$Tr(A)+\sum_{n=1}^\infty|\alpha_m|r^m=6+4\sqrt{2}\frac{r}{1-r}>10
$ 
whenever $r>\sqrt{2}-1.$ This establishes the optimality of $\sqrt{2}-1$ for $\cl M=M_3(\bb C).$

\vspace{.2 cm}

\noindent \underline{{\bf Case:} $\cl M=B(H)$} Note that the sequence $\frac{n}{3n-2}$ converges to $\frac{1}{3}.$ Thus, for each positive number $\epsilon,$ there exists $n$ such that $\frac{n}{3n-2}<\frac{1}{3}+\epsilon.$ Then by {\bf Subcase: General $n$}, we get matrices $A, \ S$ and 
$\{A_m\}$ in $M_n(\bb C)$ which satisfy the hypotheses of the Theorem such that inequality (\ref{bohr2}) fails for 
$r=\frac{1}{3}+\epsilon.$ Lastly, using the fact that $M_n\subseteq B(\cl H)$ (upto unitary) and Example \ref{matrices}, 
we observe that the matrices $A, \ S \in H^1(\cl M) \ A_m\in H^\infty_0(\cl M),$ and they satisfy the hypotheses of the 
Theorem such that inequality (\ref{bohr2}) fails for $r=1/3+\epsilon.$ This establishes the optimality of $r=1/3$ in inequality 
(\ref{bohr2}) for this case. 

\vspace{.2 cm}

\noindent \underline{{\bf Case: $\cl M$ is a commutative von Neumann algebra}}  Take $\cl M=L^\infty(\bb T)$. As discussed in Example \ref{comm} we can take $H^\infty(\cl M)=H^\infty(\bb T).$ Then $H^1(\cl M)=H^1(\bb T)$ and $H^\infty_0(\cl M)$ is the space of all $H^\infty(\bb T)$ functions for which the Lebesgue integral is zero. Now by taking $x\in H^\infty(\bb T), \ y=1,$ and using the identification of $H^\infty(\bb T)$ with $H^\infty(\bb D),$ Theorem \ref{MT2} yields the classical Bohr's inequality as stated in Corollary \ref{bineq}. But for classical Bohr's inequality it is well-known that $r=1/3$ is an optimal bound; hence for the collective class of commutative von Neumann algebras the optimal bound for $r$ in inequality (\ref{bohr2}) is 1/3. 
\end{proof}

\begin{remark} The classical Bohr's Inequality (Corollary \ref{bineq}) follows from Theorem \ref{MT2} by taking $\cl M=L^\infty(\bb T).$ The details have been discussed in the proof of the theorem in the case when $\cl M$ is a commutative von Neumann algebra.
\end{remark}

In view of Example \ref{matrices}, Theorem \ref{MT2} yields a version of Bohr's inequality for finite matrices when $\cl M=M_n(\bb C)$ and for trace-class operators when $\cl M=B(H)$ for some infinite dimensional separable Hilbert space $H.$ Interestingly, the techniques used in the proof of Theorem \ref{MT2} can be used to generalize these versions by obtaining similar inequalities under relaxed hypotheses. We record these generalizations below as remarks.

\begin{remark}\label{Sch2} For the case when $\cl M=B(H)$ for an infinite dimensional separable Hilbert space in Theorem \ref{MT2}, we actually can obtain an inequality similar to inequality (\ref{bohr2}) for a much bigger collection of trace-class operators. Suppose we start with any $A\in\cl C_1$ such that $Tr(A)\ge 0$ and $Re(A)\le S$ for some self-adjoint operator $S\in \cl C_1,$ and suppose there is a sequence $\{A_n\}_{n=1}^\infty$ in $B(\cl H)$ that satisfies $Tr(SA_n)=0, \ Tr(AA_n)=0,$ and $||A_n||\le 1$ 
for each $n\ge 1.$ Then for $\alpha_0=Tr(A)$ and $\alpha_n=Tr(AA_n^*)$ for $n\ge 1,$ we get   
\begin{equation}\label{vNC1}
\sum_{n=0}^\infty |\alpha_n|r^n\le Tr(S)
\end{equation}
whenever $0\le r\le 1/3.$ Note that the finite matrices that were constructed in the proof of Theorem \ref{MT2} to settle the optimality of $r =1/3$ for the case $\cl M=B(H)$ in fact satisfy the hypotheses mentioned above. Hence by using the same set of matrices we obtain optimality of $r=1/3$ in inequality (\ref{vNC1}) as well. 
\end{remark}

\begin{remark}\label{MT1} When $\cl M=M_n(\bb C)$ in Theorem \ref{MT2}, we don't need to restrict ourselves to the class of upper triangular matrices only as is done in Theorem \ref{MT2}. In fact, for any $A\in M_n(\bb C)$ with $Tr(A)\ge 0$ and $Re(A)\le S$ for some self-adjoint $S\in M_n(\bb C)$ suppose we have a sequence $\{A_m\}_{m=1}^\infty$ in $M_n(\bb C)$ such that 
$Tr(SA_m)=0, \ Tr(AA_m)=0,$ and $||A_m||\le 1$ for all $m\ge 1.$ Then  for $\alpha_0=Tr(A)$ and $\alpha_m=Tr(AA_m^*)$ for $m\ge 1,$ we obtain   
\begin{equation}\label{bohr0}
\sum_{m=0}^\infty |\alpha_m|r^m\le Tr(S)
\end{equation} 
whenever $0\le r\le \frac{1}{3}.$ It is interesting to note that, unlike Theorem \ref{MT2} for $\cl M=M_n(\bb C),$ here $r=1/3$ is an optimal bound for $r$ in inequality (\ref{bohr0}) for all $n\ge 2.$
To establish this assertion for $n=2$, given any $r>1/3$ choose $\theta$ such that $\frac{1-r}{2r}<\theta<1$ and choose an integer $k$ such that 
$k>\frac{\theta}{2(1-\theta)}.$ Then taking 
\begin{eqnarray*}
A=\left[
\begin{array}{cc}
\frac{1}{2}+ik & \frac{1}{2}\\
\frac{1}{2} & \frac{1}{2}-ik
\end{array}\right], \quad  
A_m=\left[
\begin{array}{cc}
\frac{-\theta}{2k}&i\theta\\
i\theta &\frac{\theta}{2k}
\end{array}\right].
\end{eqnarray*}
for all $m\ge 1,$ and $S=I_2,$ we can easily check that inequality (\ref{bohr0}) fails for this set of matrices. Hence, $r=1/3$ is the optimal bound for $r$ in inequality (\ref{bohr0}) for $n=2.$ Furthermore, for any $n\ge 3,$ we can use this same set of 
$2\times 2$ matrices to construct $n\times n$ matrices by simply adding extra rows and columns of zeroes to prove that 1/3 is the optimal bound for $r$ in inequality (\ref{bohr0}) for that choice of $n.$ 
\end{remark}

\subsection*{Acknowledgements} Both authors thank the Mathematical Sciences Foundation, Delhi for support and facilities needed to complete the present work.

\end{document}